\numberwithin{equation}{section}
\theoremstyle{plain}
\newtheorem{theorem}{Theorem}[section]
\newtheorem{rem}[theorem]{Remark}
\def\X{\mathbb{Y}}
\begin{document}
\title[A note on best constants for sharp Weighted Integral Hardy inequalities ]{A note on best constants for Weighted Integral Hardy inequalities on homogeneous groups}

\author[Michael Ruzhansky]{Michael Ruzhansky}
\address{
	Michael Ruzhansky:
	\endgraf
	Department of Mathematics: Analysis, Logic and Discrete Mathematics
	\endgraf
	Ghent University, Belgium
	\endgraf
	and
	\endgraf
	School of Mathematical Sciences
	\endgraf
	Queen Mary University of London
	\endgraf
	United Kingdom
	\endgraf
	{\it E-mail-} {\rm michael.ruzhansky@ugent.be}
}

\author[A. Shriwastawa]{Anjali Shriwastawa}
\address{
 Anjali Shriwastawa:
 \endgraf
  DST-Centre for Interdisciplinary Mathematical Sciences  \endgraf
  Banaras Hindu University, Varanasi-221005, India
  \endgraf
  {\it E-mail-} {\rm anjalisrivastava7077@gmail.com}}

\author[B.Tiwari]{Bankteshwar Tiwari}
\address{
 Bankteshwar Tiwari :
 \endgraf
  DST-Centre for Interdisciplinary Mathematical Sciences  \endgraf
  Banaras Hindu University, Varanasi-221005, India
  \endgraf
  {\it E-mail-} {\rm banktesht@gmail.com}}


\subjclass[2010]{26D10, 22E30, 45J05.}

\begin{abstract} The main aim of this note is to prove sharp weighted integral Hardy inequality and conjugate integral Hardy inequality on homogeneous Lie groups  with any quasi-norm for the range $1<p\leq q<\infty.$ We also calculate the precise value of sharp constants in respective inequalities, improving the result of \cite{RV} in the case of homogeneous groups.
\end{abstract}
\maketitle
\allowdisplaybreaks
\section{Introduction}

In his seminal papers \cite{HD1,HD2}, G. H. Hardy stated and proved in 1920  and   1925, respectively, the following  inequality, which is now known as ``(integral) Hardy inequality": For a non-negative and measurable function $f$ on $(0,\infty)$, we have
\begin{align}\label{HD11}
    \int_0^\infty\left(\frac{1}{x}\int_0^x f(t)\, dt\right)^pdx\le \left(\frac{p}{p-1}\right)^p\int_0^\infty f^p(x)\, dx,\quad p>1.
\end{align}
The first generalization of \eqref{HD11} was presented by Hardy \cite{HD3} in 1927, which is a weighted form of \eqref{HD11} and state  that: If $f$ is non-negative and measurable function on $(0,\infty)$, then
\begin{align}\label{HD12}
 \int_0^\infty\left(\frac{1}{x}\int_0^x f(t)dt\right)^p x^\alpha \,dx\le \left(\frac{p}{p-1-\alpha}\right)^p\int_0^\infty f^p(x) x^\alpha \,dx,\quad p\ge 1,\quad \alpha<p-1.   
\end{align}

The further developments and improvements of \eqref{HD11} and \eqref{HD12}  are discussed in several books, monographs and papers; we refer  \cite{KM1,KP, KM2, KM3,PS,OK,D} and references therein for more details. Let us first mention the following modern form of weighted version of Hardy's original inequality:
\begin{align}\label{HD13}
  \left(\int_0^\infty\left(\int_0^x f(t)dt\right)^q u(x)\, dx\right)^\frac{1}{q}\le C \left(\int_0^\infty f^p(x) v(x)\, dx\right)^\frac{1}{p},   
\end{align}
where $f(x)\ge 0$, $u$ and $v$ are weights functions  and $1\le p,q <\infty$. In this paper, we consider the case $1 < p\le q < \infty$. 
\newline The constants in \eqref{HD11}  and \eqref{HD12} are sharp. To find a best constant $C$ in \eqref{HD13} is an intriguing and challenging problem. In the case  of power weights, the analysis of the best constant of \eqref{HD13} is given in \cite{KP,PS}. 
Since then a lot of work has been done on Hardy inequalities in different forms and in different settings on higher dimensional Euclidean space. It is clearly impossible to give a complete overview of the literature, so let us only refer to books and surveys \cite{OK, D,KPS,EE} and references therein. 
 The sharp constants in Hardy type inequalities on the Euclidean space are known only in a few cases. More precisely, Perrson and Samko \cite{PS} derived  sharp weighted integral Hardy inequalities on  Euclidean space with power weights.
 
 In this article, our main objects are homogeneous (Lie) groups of homogeneous dimension $Q$ equipped with a quasi-norm
$|\cdot|$. By definition, a homogeneous Lie group is a Lie group equipped with a family of
dilations compatible with the group law. For the general description of the set-up of homogeneous groups, we refer to \cite{FS,FR, RS}.
Particular examples of homogeneous groups are the Euclidean space $\mathbb{R}^n$ (in which case $Q = n$), the
Heisenberg group, as well as general stratified groups (homogeneous Carnot groups) and graded groups. Recently, Hardy type inequalities and their best constants have been extensively investigated in non-commutative settings (e.g. Heisenberg groups, graded groups, homogeneous groups); we cite \cite{FL12,RS, RY, RS1} just to a mention a few of them.

More generally,  Ruzhansky and Verma \cite{RV} obtained several characterizations of weights for two-weight Hardy inequalities to hold on general metric measure spaces possessing polar decompositions for the range $1<p \leq q<\infty$ (see, \cite{RV1} for the case $0<q<p$ and $1<p<\infty$). As a consequence, one deduced new weighted Hardy inequalities on $\mathbb{R}^n$, on homogeneous groups, on hyperbolic spaces and on Cartan–Hadamard manifolds \cite{RV}. In particular, one proved the following integral Hardy inequality \cite[Corollary 3.1]{RV} on homogeneous groups.
\begin{theorem} \label{RVtheo}
Let $\mathbb{G}$ be a homogeneous group of homogeneous dimension $Q$, equipped with a quasi norm $|\cdot|$. Let $1<p\le q<\infty$ and  let $\alpha, \beta \in \mathbb{R},$ then the inequality 
\begin{align}\label{EQ1}
    \left(\int_\mathbb{G}\left(\int_{\mathbb{B}(0, |x|)}|u(y)| dy \right)^q|x|^\beta dx\right)^\frac{1}{q}\le C\left(\int_\mathbb{G}|u(x)|^q |x|^\alpha dx \right)^\frac{1}{p}
\end{align}
holds for all measurable functions $u$ on $\mathbb{G}$ if and only if \begin{align}\label{thm1.1}
\beta+Q<0,\quad \alpha<Q(p-1)\quad\text{and}\quad 
q(\alpha+Q)-p(\beta+Q)=pqQ.
\end{align}
 Moreover, the constant $C$ for  \eqref{EQ1} satisfies
 \begin{align} \label{shrco}
   \frac{\mathfrak{S}^{\frac{1}{q}+\frac{1}{p'}}}{|\beta +Q|^\frac{1}{q}\left( \alpha\left(1-p'\right)+Q\right)^\frac{1}{p'}} \le C\le (p')^\frac{1}{p'} (p)^\frac{1}{q} \frac{\mathfrak{S}^{\frac{1}{q}+\frac{1}{p'}}}{|\beta +Q|^\frac{1}{q}\left( \alpha\left(1-p'\right)+Q\right)^\frac{1}{p'}}  \end{align}
   where $\mathfrak{S}$ is the area of the unit sphere in $\mathbb{G}$ with respect to the quasi-norm $|\cdot|$.
\end{theorem}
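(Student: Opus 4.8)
The plan is to reduce \eqref{EQ1} to a one–dimensional weighted Hardy inequality with pure power weights by means of the polar decomposition that every homogeneous group admits, and then to read off everything from the classical Muckenhoupt–Bradley theory. Recall that there is a unique positive measure $d\sigma$ on the unit sphere $\mathbb{S}=\{x\in\mathbb{G}:|x|=1\}$, of total mass $\mathfrak{S}=\sigma(\mathbb{S})$, with $\int_\mathbb{G} f\,dx=\int_0^\infty\!\int_{\mathbb{S}} f(r\omega)\,r^{Q-1}\,d\sigma(\omega)\,dr$ for every integrable $f$. Writing $r=|x|$, the ball integral depends only on $r$, namely $\int_{\mathbb{B}(0,r)}|u(y)|\,dy=\int_0^r F(s)\,s^{Q-1}\,ds$ with $F(s)=\int_{\mathbb{S}}|u(s\omega)|\,d\sigma(\omega)$, so that the left-hand side of \eqref{EQ1} equals $\mathfrak{S}^{1/q}\big(\int_0^\infty(\int_0^r F(s)s^{Q-1}ds)^q r^{\beta+Q-1}dr\big)^{1/q}$.

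First I would treat the angular variable by Hölder's inequality on the sphere, $F(s)\le\mathfrak{S}^{1/p'}\big(\int_{\mathbb{S}}|u(s\omega)|^p d\sigma\big)^{1/p}$, which is an equality exactly when $u$ is radial. Setting $g(s)=s^{Q-1}\big(\int_{\mathbb{S}}|u(s\omega)|^p d\sigma\big)^{1/p}$ turns the right-hand side of \eqref{EQ1} into $\big(\int_0^\infty g^p(s)\,s^{\alpha+Q-1-p(Q-1)}\,ds\big)^{1/p}$ and bounds the left-hand side by $\mathfrak{S}^{1/q+1/p'}$ times the Hardy functional $\big(\int_0^\infty(\int_0^r g)^q w\,dr\big)^{1/q}$ with the power weights $w(r)=r^{\beta+Q-1}$ and $v(r)=r^{\alpha+Q-1-p(Q-1)}$. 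Thus \eqref{EQ1} reduces, with the explicit extra factor $\mathfrak{S}^{1/q+1/p'}$, to the one-dimensional inequality $\big(\int_0^\infty(\int_0^r g)^q w\,dr\big)^{1/q}\le C_1\big(\int_0^\infty g^p v\,dr\big)^{1/p}$; since the Hölder step is an equality for radial $u$ this reduction is sharp, so the best constants obey $C=\mathfrak{S}^{1/q+1/p'}C_1$.

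Next I would invoke the classical Muckenhoupt–Bradley characterization for $1<p\le q<\infty$: the one-dimensional inequality holds iff $A_1:=\sup_{r>0}\big(\int_r^\infty w\big)^{1/q}\big(\int_0^r v^{1-p'}\big)^{1/p'}<\infty$, and then $A_1\le C_1\le (p')^{1/p'}p^{1/q}A_1$. It remains to evaluate $A_1$ for these power weights. The integral $\int_r^\infty x^{\beta+Q-1}dx$ converges precisely when $\beta+Q<0$, giving $|\beta+Q|^{-1}r^{\beta+Q}$; the integral $\int_0^r x^{(\alpha+Q-1-p(Q-1))(1-p')}dx$ converges at the origin precisely when $\alpha<Q(p-1)$, and using $(\alpha+Q-1-p(Q-1))(1-p')+1=\alpha(1-p')+Q$ it equals $(\alpha(1-p')+Q)^{-1}r^{\alpha(1-p')+Q}$. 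Combining, the power of $r$ in $A_1$ is $\frac{\beta+Q}{q}+\frac{\alpha(1-p')+Q}{p'}$, which vanishes for all $r$ exactly when $q(\alpha+Q)-p(\beta+Q)=pqQ$; in that case $A_1=|\beta+Q|^{-1/q}(\alpha(1-p')+Q)^{-1/p'}$, and otherwise $A_1=\infty$. Writing $A=\mathfrak{S}^{1/q+1/p'}A_1$ then produces both the characterization \eqref{thm1.1} and the two-sided bound \eqref{shrco}.

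Necessity comes for free from the same computation: if any of the three conditions fails then $A_1=\infty$, and testing \eqref{EQ1} against radial $u$ (for which the reduction is an identity) forces the one-dimensional inequality, hence \eqref{EQ1}, to fail. I expect the only delicate point to be the exact bookkeeping of exponents — verifying that the three algebraic constraints coming from (i) convergence at infinity, (ii) convergence at the origin, and (iii) the vanishing of the $r$-power coincide exactly with \eqref{thm1.1}, and that the simplification of the exponent above goes through. The sole genuine external input is the explicit constant in the Muckenhoupt–Bradley estimate, which is what creates the gap factor $(p')^{1/p'}p^{1/q}$ between the lower and upper bounds in \eqref{shrco}; closing this gap is exactly the improvement pursued in the present note.
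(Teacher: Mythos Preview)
Your argument is correct. Note, however, that the paper does not itself prove Theorem~\ref{RVtheo}; it is quoted from \cite{RV} as the result to be sharpened. The proof in \cite{RV} proceeds slightly differently: rather than reducing to a one-dimensional Hardy inequality first, it applies the Muckenhoupt--Bradley criterion directly on the metric measure space $\mathbb{G}$, computing $\mathcal D_1=\sup_{x}\big(\int_{\mathbb{G}\setminus\mathbb{B}(0,|x|)}|y|^\beta dy\big)^{1/q}\big(\int_{\mathbb{B}(0,|x|)}|y|^{\alpha(1-p')}dy\big)^{1/p'}$ via the polar decomposition. Since the weights are radial, this is the same integral you compute, just with the factor $\mathfrak{S}^{1/q+1/p'}$ absorbed into $\mathcal D_1$ rather than extracted beforehand; the two routes are the same calculation in a different order.

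What is worth pointing out is that your reduction---polar decomposition plus H\"older on the sphere, with equality for radial $u$---is precisely the method this paper uses to prove its main improvement, Theorem~\ref{sharphardy}. You have already observed the key fact that the reduction is sharp, giving $C=\mathfrak{S}^{1/q+1/p'}C_1$ with $C_1$ the best one-dimensional constant. The only reason you land on \eqref{shrco} rather than the exact constant is that you feed in the Muckenhoupt--Bradley bound $A_1\le C_1\le (p')^{1/p'}p^{1/q}A_1$; replacing this by the Persson--Samko value of $C_1$ (Theorem~\ref{Sharpconst}) immediately yields the sharp constant of Theorem~\ref{sharphardy}. So your proposal not only proves the cited theorem but already contains the whole mechanism of the paper's main result.
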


In the above Theorem \ref{RVtheo} , the constant appearing in inequality \eqref{EQ1} is bounded by upper and lower bound of certain quantities as given in \eqref{shrco} but may not be sharp, in general. The main aim of this paper is to fill this gap and to obtain a sharp version of \eqref{EQ1} with the precise value of the sharp constant. We also prove a sharp weighted conjugate Hardy inequality on Homogeneous (Lie) groups. For the proof, we follow the method developed in \cite{PS} in the (isotropic and abelian) setting of  Euclidean space.  We note that also in the abelian (both isotropic and anisotropic) cases of $\mathbb{R}^n$, our results provide new insights in view of the arbitrariness of the quasi-norm $|\cdot|$ which does not necessarily have to be the Euclidean norm.

In this next section, we present basic definition, notation and terminologies related with homogeneous Lie groups and some basic result concerning to the one dimensional sharp Hardy inequalities. In the last section, we discuss our main results on sharp weighted Hardy inequalities on homogeneous groups and obtain the precise value of the sharp constant.

Throughout this paper, the symbol $A\asymp B$ means $\exists\,C_{1},C_{2}>0$ such that $C_{1}A\leq B\leq C_{2}A$.

\section{Preliminaries} \label{preli}

In this section, we recall the basics of homogeneous groups and Hardy inequalities on Euclidean space. For more detail on homogeneous groups as well as several functional inequalities on homogeneous groups, we refer to  monographs \cite{FR, FS,RS} and references therein.  For Hardy inequalities on Euclidean space, one can see \cite{PS,KPS,KP} and references therein.

\subsection{Basics on homogeneous groups} 

A Lie group $\mathbb{G}$ (identified with $(\mathbb{R}^N, \circ)$) is called a homogeneous group if it is equipped  with a dilation mapping $$ D_\lambda:\mathbb{R}^N \rightarrow \mathbb{R}^N,\quad\lambda >0,$$ defined as 
\begin{equation}
 D_\lambda(x)= (\lambda^{v_1}x_1,\lambda^{v_2}x_2,\ldots,\lambda^{v_N}x_N) ,\quad v_1,v_2,\dots,v_N > 0,
\end{equation} 
which is an automorphism of the group $\mathbb{G}$ for each $\lambda >0 $. At times, we will denote the image of $x\in \mathbb{G}$ under $D_\lambda$ by $\lambda(x)$ or, simply $\lambda x$.
The homogeneous dimension $Q$ of a homogeneous group $\mathbb{G}$ is defined by 
$$Q=v_1+v_2+\dots+v_N.$$
It is well known that a homogeneous group is necessarily nilpotent and unimodular. The Haar measure $dx$ on $\mathbb{G}$ is nothing but the Lebesgue measure on $\mathbb{R}^N$. 

Let us denote the Haar volume of a measurable set $\omega \subset \mathbb{G}$ by $|\omega|$. Then we have the following consequences: for $\lambda >0$
\begin{equation}
    |D_\lambda(\omega)|=\lambda^Q |\omega|  \quad \text{and} \quad \int_{\mathbb{G}} f(\lambda x) dx = \lambda^{-Q}\int_{\mathbb{G}} f(x) dx.
\end{equation}
A quasi-norm on $\mathbb{G}$ is any continuous non-negative function $ |\cdot|:\mathbb{G} \rightarrow [0,\infty)$ satisfying the following conditions:
\begin{itemize}
    \item[(i)] $|x|=|x^{-1}|$ for all $x \in \mathbb{G},$
    \item[(ii)] $|\lambda x|=\lambda |x|$\, for all \, $x \in \mathbb{G}$ and $\lambda >0,$
    \item[(iii)] $|x|=0 \iff x=0.$
\end{itemize}

If $\mathfrak{S}= \{x \in \mathbb{G}: |x|=1\} \subset \mathbb{G}$ is the unit sphere with respect to the quasi-norm, then there is a unique Radon measure $\sigma$ on $\mathfrak{S}$ such that for all $f \in L^1(\mathbb{G})$, we have the followoing polar decomposition
\begin{equation} \label{polar}
    \int_{\mathbb{G}} f(x) dx =\int_0^\infty \int_\mathfrak{S} f(ry) r^{Q-1}d\sigma(y) dr.
\end{equation}

Firstly, let us consider the metric space $(\mathbb X,d)$ with a Borel measure $dx$ allowing for the following {\em polar decomposition} at $a\in{\mathbb X}$: we assume that there is a locally integrable function $\lambda \in L^1_{loc}$  such that for all $f\in L^1(\mathbb X)$ we have
   \begin{equation}\label{EQ:polarintro}
   \int_{\mathbb X}f(x)dx= \int_0^{\infty}\int_{\Sigma} f(r,\omega) \lambda(r,\omega) d\omega_{r} dr,
   \end{equation}
    for some set $\Sigma=\{x\in\mathbb{X}:d(x,a)=r\}\subset \mathbb X$ with a measure on it denoted by $d\omega$, and $(r,\omega)\rightarrow a $ as $r\rightarrow0$.
    
 Similar to the Euclidean spaces, we define the weighted Hardy operator and the weighted conjugate Hardy operator  on a homogeneous group $\mathbb{G}$ by
    \begin{equation}{\label{WH}}
       Hu(x)=|x|^{\alpha-Q} w(|x|)\int_{\mathbb{B}(0,|x|)}
       \frac{u(y)}{w(|y|)} dy,
       \end{equation}
    \begin{equation}
    Hu(x)=|x|^\alpha w(|x|)\int_{\mathbb{G} \backslash \mathbb{B}(0,|x|)}
       \frac{u(y)}{|y|^Q w(|y|)} dy,
    \end{equation}
where $\alpha \ge 0$ and $w$ is a radial weight on $\mathbb{G}.$ In particular, for $\mu \in \mathbb{R}$ the function $w(x):=|x|^\mu, \forall x \in \mathbb{G}$, defined a weight function on $\mathbb{G}.$

Now, we recall the one dimensional sharp weighted Hardy and conjugate Hardy inequalities established in \cite[Theorem 2.7]{PS}. This result will be used to prove our main result.
\begin{theorem}\label{Dim1}
Let $1<p\le q<\infty$. Then the following statements $(a)$ and $(b)$ hold and are equivalent:

\begin{itemize}
\item[(a)] The inequality 
\begin{equation}\label{EQ:Hardy3}
     \left(\int_0^\infty \left(\int_0^x u(t) dt \right)^q x^\beta dx\right)^\frac{1}{q}\le D_{p,q, \alpha}\left(\int_0^xu^p(x) x^\alpha dx\right)^\frac{1}{p}
 \end{equation}
holds for all measurable functions $u$ on $(0,\infty)$ if and only if
\begin{equation}\label{EQ:H3}
\alpha < p-1 \quad and \quad q(\alpha+1)-p(\beta+1)=pq
\end{equation}
\item[(b)] The inequality 
\begin{equation}\label{EQ:Hardy4}
\left(\int_0^\infty \left(\int_x^\infty u(t) dt \right)^q x^{\beta_0} dx\right)^\frac{1}{q}\le D_{p,q,\alpha}\left(\int_0^\infty u^p(x) x^{\alpha_0} dx\right)^\frac{1}{p}
\end{equation} 
holds for all measurable functions $u$ on $(0,\infty)$ if and only if
\begin{equation}\label{EQ:H4}
{\alpha_0}> p-1 \quad and \quad q(\alpha_0+1)-p(\beta_0+1)=pq,
\end{equation} 
and also
\item[(c)]
The relation between the parameters $\alpha$ and $\alpha_0$ is
\begin{equation}
\alpha_0=-\alpha-2+2p
\end{equation}
and  the best constants in \eqref{EQ:Hardy3} and \eqref{EQ:Hardy4} are same.
\end{itemize}
\end{theorem}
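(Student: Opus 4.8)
The plan is to prove the Hardy inequality \eqref{EQ:Hardy3} of part (a) directly, and then to obtain part (b) together with part (c) from a single inversion substitution $x\mapsto 1/x$ that interchanges $\int_0^x$ with $\int_x^\infty$ while preserving the extremal ratio; in this way the only substantive work lies in (a). For the necessity of \eqref{EQ:H3} I would first extract the scaling relation by a dilation argument: testing \eqref{EQ:Hardy3} on the functions $u(\lambda\,\cdot)$, $\lambda>0$, multiplies its left-hand side by $\lambda^{-(\beta+1)/q-1}$ and its right-hand side by $\lambda^{-(\alpha+1)/p}$, and since $D_{p,q,\alpha}$ cannot depend on $\lambda$ the two exponents must agree, which is precisely $q(\alpha+1)-p(\beta+1)=pq$. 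For the condition $\alpha<p-1$ I would insert the truncated power $u(t)=t^{-\sigma}\mathbf 1_{(0,1)}(t)$ with $0<\sigma<1$: the right-hand side is then finite, while for $x\ge 1$ the inner integral is the positive constant $(1-\sigma)^{-1}$, so the left-hand side dominates $\int_1^\infty x^\beta\,dx$, which diverges unless $\beta+1<0$; by the scaling relation this is exactly $\alpha<p-1$.

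For sufficiency I would pass to the multiplicative group through $x=e^{\xi}$, $t=e^{\tau}$. Setting $F(\tau)=e^{\tau(\alpha+1)/p}u(e^{\tau})$ normalises the right-hand side to $\|F\|_{L^p(\mathbb R)}$, and writing $G(\xi)=e^{\xi(\beta+1)/q}\int_0^{e^{\xi}}u(t)\,dt$ normalises the left-hand side to $\|G\|_{L^q(\mathbb R)}$. A direct computation then gives $G=F*\kappa$ with $\kappa(s)=e^{s(\beta+1)/q}\mathbf 1_{\{s>0\}}$; the decisive point is that the scaling relation in \eqref{EQ:H3} is exactly what forces the kernel to depend only on $\xi-\tau$, i.e.\ to be a genuine convolution. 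Moreover $\alpha<p-1$ is equivalent to $\beta+1<0$, hence to $\kappa\in L^{r}(\mathbb R)$ with $\tfrac1r=1-\tfrac1p+\tfrac1q\in(0,1]$, and Young's convolution inequality yields $\|G\|_{L^q}\le\|\kappa\|_{L^r}\|F\|_{L^p}$. This establishes \eqref{EQ:Hardy3} with a finite constant, completing (a).

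Finally, in \eqref{EQ:Hardy3} I would substitute $x=1/s$, $t=1/\tau$ and set $\tilde u(\tau)=\tau^{-2}u(1/\tau)$. Then $\int_0^x u\,dt$ becomes $\int_s^\infty\tilde u\,d\tau$, the weight $x^{\beta}$ becomes $s^{-\beta-2}$, and $t^{\alpha}$ becomes $\tau^{2p-\alpha-2}$, so that \eqref{EQ:Hardy3} is carried over to the conjugate inequality \eqref{EQ:Hardy4} with $\beta_0=-\beta-2$ and $\alpha_0=-\alpha-2+2p$, exactly as asserted in (c). One checks directly that \eqref{EQ:H3} is transformed into \eqref{EQ:H4} (in particular $\alpha<p-1\Leftrightarrow\alpha_0>p-1$, and the scaling relation is preserved). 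Since $u\mapsto\tilde u$ is an involutive bijection that leaves the value of each side unchanged, the two extremal problems have the same supremum, so the best constants in \eqref{EQ:Hardy3} and \eqref{EQ:Hardy4} coincide; this proves (b) and (c) at once and shows the equivalence of (a) and (b).

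The step I expect to be most delicate is the assertion of \emph{sharpness} rather than mere boundedness: Young's inequality supplies the finite bound $\|\kappa\|_{L^r}$, but this is not the optimal constant when $p<q$ (optimality there would require Bliss-type extremisers, or the sharp Beckner form of Young's inequality). What makes the present statement tractable is that part (c) asks only for the equality of the two best constants, not for their common value, and the inversion symmetry delivers this equality for free; thus the genuine obstacle of pinning down the sharp constant is side-stepped at this stage and deferred to the computation of $D_{p,q,\alpha}$ itself.
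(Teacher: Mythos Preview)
The paper does not give its own proof of this statement: Theorem~\ref{Dim1} is quoted verbatim from \cite[Theorem~2.7]{PS} in the preliminaries section and is used as a black box in the proof of the main results. So there is no in-paper argument to compare your proposal against.

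That said, your proposal is correct and self-contained for what Theorem~\ref{Dim1} actually asserts. The dilation argument for the scaling relation and the truncated-power test for the range restriction are standard and valid; the multiplicative change of variable turning the weighted Hardy operator into a genuine convolution on $\mathbb R$ (with the scaling relation being precisely the condition for translation invariance of the kernel) is the classical route, and Young's inequality then gives a finite bound under $\alpha<p-1\Leftrightarrow\beta+1<0$. Your inversion $x\mapsto 1/x$, $\tilde u(\tau)=\tau^{-2}u(1/\tau)$ is exactly the right involution: it carries \eqref{EQ:Hardy3} to \eqref{EQ:Hardy4} with $\alpha_0=-\alpha-2+2p$, $\beta_0=-\beta-2$, preserves both sides of the inequality, and hence transports the best constant unchanged. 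Your closing caveat is also accurate and well placed: Young's constant $\|\kappa\|_{L^r}$ is not optimal for $p<q$, but Theorem~\ref{Dim1} only asks for the \emph{existence} of a finite best constant and the \emph{equality} of the two best constants, not their value (the explicit sharp constant is the content of the separate Theorem~\ref{Sharpconst}, which in \cite{PS} is obtained via the Bliss extremisers you allude to). So nothing is missing from your argument for the statement as written.
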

\begin{theorem} \label{Sharpconst}
Let $1<p<q<\infty$ and let the parameters $\alpha$ and $\beta$ satisfy \eqref{EQ:H3}. Then the sharp constant $D_{p,q,\alpha}$  in \eqref{EQ:Hardy3} is given by 
\begin{equation} \label{sharp<q}
D_{p,q,\alpha}=\left ( \frac{p-1}{p-1-\alpha}\right)^{\frac{1}{p'}+\frac{1}{q}} \left( \frac{p'}{q}\right)^\frac{1}{p} \left( \frac{\frac{q-p}{p}\Gamma \left(\frac{pq}{q-p} \right)}{\Gamma \left( \frac{p}{q-p} \right)\Gamma \left( \frac{p(q-1)}{q-p}\right)}\right)^{\frac{1}{p}-\frac{1}{q}},
\end{equation}
when $p<q$ and
\begin{equation}
 D_{p,p,\alpha}=\lim_{p \rightarrow q} D_{p,q,\alpha} =\frac{p}{p-1-\alpha},  
\end{equation} when $p=q.$
\end{theorem}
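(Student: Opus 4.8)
The plan is to compute $D_{p,q,\alpha}$ as the exact value of the extremal quotient
\[
D_{p,q,\alpha}=\sup_{u}\frac{\left(\int_0^\infty\big(\int_0^x u(t)\,dt\big)^q x^\beta\,dx\right)^{1/q}}{\left(\int_0^\infty u(x)^p x^\alpha\,dx\right)^{1/p}},
\]
the supremum being over nonnegative $u$ for which the denominator is finite and nonzero. Writing $F(x)=\int_0^x u$, so that $u=F'$ and $F(0)=0$, this reduces to maximizing $\big(\int_0^\infty F^q x^\beta\,dx\big)^{1/q}$ under the constraint $\int_0^\infty (F')^p x^\alpha\,dx=1$. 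The first fact I would record is that the scaling condition in \eqref{EQ:H3}, namely $q(\alpha+1)-p(\beta+1)=pq$, is precisely the condition under which the quotient is invariant under the dilations $u\mapsto u(\lambda\cdot)$, $\lambda>0$: a direct change of variables shows that the numerator and denominator pick up the same power of $\lambda$ exactly when \eqref{EQ:H3} holds. This dilation invariance is what makes an explicit self-similar extremizer available in the range $p<q$.

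Next I would derive the Euler--Lagrange equation for the constrained problem. Varying $F$ and integrating by parts (the boundary terms vanishing) gives the second-order nonlinear ODE
\[
\frac{d}{dx}\Big(p\,(F'(x))^{p-1}x^\alpha\Big)=-\lambda\, q\, F(x)^{q-1}x^\beta
\]
for a Lagrange multiplier $\lambda$. Using the one-parameter dilation symmetry established above, one can lower the order and integrate this equation in closed form; the extremizer turns out to be an explicit function assembled from a binomial factor, of the form $F(x)=c\,x^{a}\big(1+b\,x^{\gamma}\big)^{-\delta}$ with exponents determined by $p,q,\alpha$ (equivalently, an explicit substitution transforms the problem into the classical Bliss inequality, whose extremal has this shape). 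I would then verify that the candidate genuinely realizes the supremum rather than being a spurious critical point, using the structure inherited from $p\le q$ together with the finiteness of both integrals when $p<q$.

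With the extremizer in hand, the computation reduces to evaluating two integrals of the form $\int_0^\infty t^{s-1}(1+t)^{-(s+r)}\,dt=B(s,r)=\Gamma(s)\Gamma(r)/\Gamma(s+r)$ after the substitution $t=b\,x^{\gamma}$. The three Gamma factors in \eqref{sharp<q} are then explained by the identity $\frac{p}{q-p}+\frac{p(q-1)}{q-p}=\frac{pq}{q-p}$, which shows that the quotient $\Gamma\!\big(\tfrac{p}{q-p}\big)\Gamma\!\big(\tfrac{p(q-1)}{q-p}\big)/\Gamma\!\big(\tfrac{pq}{q-p}\big)$ is exactly a single Beta function $B\big(\tfrac{p}{q-p},\tfrac{p(q-1)}{q-p}\big)$; combined with the factor $\tfrac{q-p}{p}$ and raised to the power $\tfrac1p-\tfrac1q$, this produces the Gamma-function block of \eqref{sharp<q}, while the normalization of the weight $x^\alpha$ and the multiplier contribute the algebraic prefactor $\big(\tfrac{p-1}{p-1-\alpha}\big)^{1/p'+1/q}\big(\tfrac{p'}{q}\big)^{1/p}$. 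The delicate point here is the bookkeeping of exponents: one must track how each power of $\lambda$, $b$, $c$ and $\gamma$ cancels so that only the stated combination survives.

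The main obstacle I anticipate is twofold. First, the rigorous identification of the extremizer: integrating the nonlinear Euler--Lagrange ODE and, more importantly, proving that the resulting self-similar function actually attains the supremum requires control of the boundary behaviour at $0$ and $\infty$ and uses $p<q$ in an essential way. Second, the degenerate case $p=q$ must be handled separately, since there the binomial extremizer collapses to the pure power $u(x)=x^{-(\alpha+1)/p}$, which fails to lie in the weighted $L^p$ space, so the supremum is only approached asymptotically. For this case I would use a truncated power test function $u(x)=x^{-(\alpha+1)/p}\mathbf 1_{[1,R]}(x)$ and let $R\to\infty$ to obtain the classical value $\tfrac{p}{p-1-\alpha}$ directly, and then confirm consistency by checking, via Stirling's asymptotics for the three Gamma factors as $q\downarrow p$, that $\lim_{q\to p}D_{p,q,\alpha}=\tfrac{p}{p-1-\alpha}$.
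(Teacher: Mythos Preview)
The paper does not prove this theorem; it is stated in the Preliminaries as a result quoted from Persson--Samko \cite{PS} (introduced with ``we recall the one dimensional sharp weighted Hardy and conjugate Hardy inequalities established in \cite[Theorem 2.7]{PS}''), and is then used as a black box in the proof of Theorem~\ref{sharphardy}. So there is no ``paper's own proof'' to compare against.

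That said, your outline is the classical route to this constant, going back to Bliss (1930): exploit the dilation invariance forced by the balance condition $q(\alpha+1)-p(\beta+1)=pq$, derive the Euler--Lagrange ODE, find the self-similar extremizer $F(x)=c\,x^a(1+bx^\gamma)^{-\delta}$, and evaluate the resulting integrals via the Beta function, with the diagonal $p=q$ handled by truncated powers and Stirling. As a sketch this is sound; the genuine work you flag --- showing the candidate is an actual maximizer rather than merely a critical point, and justifying the boundary behaviour --- is indeed where the effort lies, and is typically handled either by a compactness/concentration argument or, more cleanly, by reducing (via the substitution $u(x)=x^{-\alpha/p}v(x^{(p-1-\alpha)/(p-1)})$ or similar) to the unweighted Bliss inequality, for which optimality is already known. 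Either way, your plan would reproduce the constant in \eqref{sharp<q}, but it goes well beyond what the present paper attempts.
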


\section{Main results}
This section is devoted to establishing sharp weighted Hardy and conjugate Hardy inequalities on homogeneous groups. We begin with the following sharp weighted integral Hardy inequality on homogeneous groups for the range $1<p \leq q<\infty$. 

\begin{theorem} \label{sharphardy}
Let $\mathbb{G}$ be a homogeneous group with homogeneous dimension  $Q$ equipped with  quasi norm $|\cdot|$. Let $1<p\leq q<\infty$  and let $\alpha,\beta \in \mathbb{R}$. Then, the following inequality  
\begin{align}\label{sharphardy1}
    \left( \int_{\mathbb{G}} \left| \left( \int_{\mathbb{B}(0,\, |x| )} u(y)\, dy\right)\right|^q |x|^{\beta} dx\right)^{\frac{1}{q}} \leq C(p, q, Q,\alpha) \left(\int_{\mathbb{G}} |u(x)|^p |x|^{\alpha} dx \right)^{\frac{1}{p}}
\end{align} holds  for all measurable functions $u$ on $\mathbb{G}$ if and only if 
\begin{align} \label{condn}
    \alpha<Q(p-1)\quad \text{and}\quad q(\alpha+Q)-p(\beta+Q)=pq Q.
\end{align} Moreover, the sharp constant $C(p, q, Q, \alpha)$ is given by
\begin{align}
 C(p,q,Q,\alpha)= |\mathfrak{S}|^{1+\frac{1}{q}-\frac{1}{p}} \left( \frac{p-1}{Q(p-1)-\alpha}\right)^{\frac{1}{p'}+\frac{1}{q}}\left(\frac{p'}{q}\right)^{\frac{1}{p}} \left( \frac{\frac{q-p}{p}\Gamma \left(\frac{pq}{q-p} \right)}{\Gamma \left( \frac{p}{q-p} \right)\Gamma \left( \frac{p(q-1)}{q-p}\right)}\right)^{\frac{1}{p}-\frac{1}{q}}
\end{align}
when $q>p$ and 
\begin{align} \label{Cont}
C(p, p, Q, \alpha)= \lim_{p\rightarrow q} C(p,q,Q,\alpha)=\frac{p|\mathfrak{S}|}{Q(p-1)-\alpha}, 
\end{align}
when  $p=q$, where $\mathfrak{S}= \{x \in \mathbb{G}: |x|=1\} \subset \mathbb{G}$ is the unit sphere with respect to the quasi-norm $|\cdot|,$ and $|\mathfrak{S}|$ denotes the measure of the unit sphere $\mathfrak{S}$ in the homogeneous group  $\mathbb{G}$ with respect to the quasi-norm $|\cdot|$.
\end{theorem}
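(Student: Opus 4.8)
The plan is to collapse the group inequality \eqref{sharphardy1} onto the one-dimensional sharp Hardy inequality of Theorems \ref{Dim1} and \ref{Sharpconst} by means of the polar decomposition \eqref{polar}, and then to recover sharpness by testing against radial functions. Since replacing $u$ by $|u|$ only enlarges the left-hand side while leaving the right-hand side unchanged, I may assume $u\ge 0$. Writing $x=r\omega$ with $r=|x|$ and $\omega\in\mathfrak{S}$, the inner integral $\int_{\mathbb{B}(0,|x|)}u\,dy$ depends on $x$ only through $|x|$, so \eqref{polar} yields
\begin{equation}
\int_{\mathbb{G}}\left(\int_{\mathbb{B}(0,|x|)}u\,dy\right)^q|x|^\beta\,dx
=|\mathfrak{S}|\int_0^\infty\left(\int_0^r g(s)\,ds\right)^q r^{\beta+Q-1}\,dr,
\qquad g(s):=s^{Q-1}\int_{\mathfrak{S}}u(s\omega)\,d\sigma(\omega),
\end{equation}
which is a one-dimensional weighted Hardy expression for $g$ with weight exponent $\beta_1:=\beta+Q-1$.

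The decisive bookkeeping is the identification of the correct one-dimensional exponent $\alpha_1:=\alpha-(Q-1)(p-1)$. A direct computation shows that the hypotheses \eqref{condn} are \emph{equivalent} to the one-dimensional hypotheses \eqref{EQ:H3} for the pair $(\alpha_1,\beta_1)$: indeed $\alpha_1<p-1\iff\alpha<Q(p-1)$, the two balance conditions coincide, and the constraint $\beta+Q<0$ of Theorem \ref{RVtheo} follows automatically. I would then apply Theorem \ref{Dim1}(a), with sharp constant $D_{p,q,\alpha_1}$ from Theorem \ref{Sharpconst}, to the function $g$, and bound $\int_0^\infty g(s)^p s^{\alpha_1}\,ds$ by Hölder's inequality on $\mathfrak{S}$,
\begin{equation}
g(s)\le s^{Q-1}|\mathfrak{S}|^{1/p'}\left(\int_{\mathfrak{S}}u(s\omega)^p\,d\sigma(\omega)\right)^{1/p}.
\end{equation}
Because $(Q-1)p+\alpha_1=\alpha+Q-1$ by the very definition of $\alpha_1$, this gives $\int_0^\infty g^p s^{\alpha_1}\,ds\le|\mathfrak{S}|^{p/p'}\int_{\mathbb{G}}u^p|x|^\alpha\,dx$. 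Collecting the powers of $|\mathfrak{S}|$ produces the inequality with $C=|\mathfrak{S}|^{1/q+1/p'}D_{p,q,\alpha_1}$; since $p-1-\alpha_1=Q(p-1)-\alpha$ and $1/q+1/p'=1+\tfrac1q-\tfrac1p$, substituting \eqref{sharp<q} gives exactly the stated constant, the case $p=q$ being identical via the second formula of Theorem \ref{Sharpconst} (yielding \eqref{Cont}).

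For the matching lower bound, the only potentially lossy step is Hölder's inequality on $\mathfrak{S}$, which is an equality precisely when $u(s\,\cdot)$ is constant on $\mathfrak{S}$, i.e. when $u$ is radial. I would therefore test \eqref{sharphardy1} with radial functions $u(x)=|x|^{-(Q-1)}\phi(|x|)$, where $\phi$ ranges over a near-extremizing sequence for the one-dimensional inequality \eqref{EQ:Hardy3} with parameters $(\alpha_1,\beta_1)$. For such $u$ both reductions above become exact equalities, so the ratio of the two sides of \eqref{sharphardy1} tends to $|\mathfrak{S}|^{1/q+1/p'}D_{p,q,\alpha_1}=C(p,q,Q,\alpha)$, showing that $C$ cannot be lowered.

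Finally, the necessity of \eqref{condn} can be obtained independently: dilation invariance of \eqref{sharphardy1} under $u\mapsto u\circ D_\lambda$ forces the homogeneity relation $q(\alpha+Q)-p(\beta+Q)=pqQ$ (the two sides scale as $\lambda^{-Q-(\beta+Q)/q}$ and $\lambda^{-(Q+\alpha)/p}$, so they must balance as $\lambda\to0$ and $\lambda\to\infty$), while testing with truncated powers of $|x|$ forces $\alpha<Q(p-1)$; alternatively the whole ``if and only if'' is already contained in Theorem \ref{RVtheo}. The reduction and the algebra of the constant are routine once the shift $\alpha\mapsto\alpha_1=\alpha-(Q-1)(p-1)$ is isolated; the genuine point, and the step I expect to require the most care, is the sharpness argument, namely verifying that restricting to radial competitors loses nothing, so that the one-dimensional sharp constant of Theorem \ref{Sharpconst} transfers verbatim up to the explicit powers of $|\mathfrak{S}|$.
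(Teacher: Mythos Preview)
Your proposal is correct and follows essentially the same route as the paper: polar decomposition reduces the left-hand side to a one-dimensional Hardy integral in the spherical average $g(s)=s^{Q-1}\int_{\mathfrak S}u(s\omega)\,d\sigma$, Theorem~\ref{Dim1}/\ref{Sharpconst} is applied with the shifted exponent (the paper's $\gamma p$ equals your $\alpha_1=\alpha-(Q-1)(p-1)$), and H\"older on $\mathfrak S$ closes the bound with the factor $|\mathfrak S|^{1/q+1/p'}$. Your write-up is in fact a bit cleaner---the paper threads the same computation through an auxiliary weighted Hardy operator with parameters $\lambda,\mu,\delta,\gamma$---and your explicit radial-test argument for sharpness spells out what the paper relegates to a one-line remark.
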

\begin{rem}
The condition \eqref{condn} in  Theorem \ref{sharphardy1}  automatically implies that $\beta+Q<0$ in \eqref{thm1.1}  of Theorem \ref{RVtheo}, so the conditions on indices are consistent in  both theorems.
\end{rem}
\begin{proof}
 For the proof of the sharp Hardy inequality \eqref{sharphardy1} we use the  weighted Hardy operator on a homogeneous group $\mathbb{G}$ with quasi norm $|\cdot|$, given by  \eqref{WH}, in the following  form  
 \begin{equation}
     (H u)(x)=|x|^{\lambda+\mu-Q}\int_{\mathbb{B}(0,|x|)}\frac{u(y)}{|y|^\mu} dy, \quad \lambda \geq 0 ,
 \end{equation}
 where $\frac{\lambda}{Q}=\frac{1}{p}-\frac{1}{q}$ and $\mu \in \mathbb{R}$ will be chosen later accordingly.
  We will calculate the $L^p$-$L^q$ norm of the weighted Hardy operator $H$ in the range $1<p\leq q<\infty$ and eventually conclude the proof of \eqref{sharphardy1}.  For this purpose, let us estimate the following quantity:
  \begin{equation}\label{WH1}
   \|Hu\|_{L^q(\mathbb{G})}= \left(\int_\mathbb{G} \left||x|^{\lambda+\mu-Q}\int_{\mathbb{B}(0,|x|)}\frac{u(y)}{|y|^\mu} dy \right|^q dx\right)^\frac{1}{q}. 
  \end{equation}

Using the polar decomposition \eqref{polar} in the setting of  homogeneous groups  \eqref{WH1} will take the following form with $x=r\sigma' \, \text{and}\, y=\rho \sigma$:
\begin{align}\label{POLAR}
    &\left(\int_\mathbb{G} \left||x|^{\lambda+\mu-Q}\int_{\mathbb{B}(0,|x|)}\frac{u(y)}{|y|^\mu} dy \right|^q dx\right)^\frac{1}{q} \nonumber\\&=|\mathfrak{S}|^\frac{1}{q}\left(\int_0^\infty\left|r^{\frac{Q-1}{q}}r^{\lambda+\mu-Q} \int_o^r \int_\mathfrak{S} u(\rho \sigma)  \rho^{Q-1-\mu} d\rho d\sigma\right|^q dr \right)^\frac{1}{q}.
\end{align}
Now, by setting 
 $$ U(\rho)=\int_\mathfrak{S} u(\rho \sigma) d\sigma $$  in the above identity \eqref{POLAR}, we obtain
\begin{align}
   & \left(\int_\mathbb{G} \left||x|^{\lambda+\mu-Q}\int_{\mathbb{B}(0,|x|)}\frac{u(y)}{|y|^\mu} dy \right|^q dx\right)^\frac{1}{q} \nonumber\\&=|\mathfrak{S}|^\frac{1}{q}\left(\int_0^\infty \left| r^\frac{Q-1}{q}r^{\lambda+\mu-Q}  \int_0^r  U(\rho)\,\rho^{Q-1-\mu}d\rho\right|^q dr \right)^\frac{1}{q}
 \nonumber\\&=|\mathfrak{S}|^\frac{1}{q}\left(\int_0^\infty \left| r^\frac{Q-1}{q}r^{\lambda+\mu-Q}  \int_0^r U(\rho)\,\rho^{Q(\frac{1}{p}+\frac{1}{p'})-(\frac{1}{p}+\frac{1}{p'})-\mu}\,d\rho\right|^q dr \right)^\frac{1}{q},
 \end{align}
 where $p'$ is the Lebesgue conjugate of $p$, i.e., $\frac{1}{p}+\frac{1}{p'}=1.$
 Thus, we have
\begin{align}\label{eq3.9}
&\left(\int_\mathbb{G} \left||x|^{\lambda+\mu-Q}\int_{\mathbb{B}(0,|x|)}\frac{u(y)}{|y|^\mu} dy \right|^q dx\right)^\frac{1}{q} \nonumber
\\&=|\mathfrak{S}|^\frac{1}{q}\left(\int_0^\infty \left| r^{\frac{Q-1}{q}+\lambda+\mu-Q}  \int_0^r \,\rho^{\frac{Q-1}{p'}-\mu}\,\rho^{\frac{Q-1}{p}} \,U(\rho)\,d\rho\right|^q dr \right)^\frac{1}{q}    
\end{align}

By denoting $\delta=\frac{\lambda}{Q}$ and $\mu=\gamma+\frac{Q-1}{p'}$ and performing simple calculations 
\begin{align*}
    &\frac{Q-1}{q}+\lambda+\mu-Q\nonumber\\&=\frac{Q-1}{q}+\lambda+\gamma+\frac{Q-1}{p'}-Q\nonumber\nonumber\\&=\left(Q-1\right)\left(\frac{1}{p}-\frac{\lambda}{Q}\right)+\lambda+\gamma+\frac{Q-1}{p'}-Q
    \quad\quad\quad\left(\textnormal{since}\, \frac{\lambda}{Q}=\frac{1}{p}-\frac{1}{q}\right)\nonumber\\&=\left(Q-1\right)\left(\frac{1}{p}+\frac{1}{p'}\right)-\frac{\lambda(Q-1)}{Q}+\lambda+\gamma-Q\nonumber\\&=Q-1-\lambda+\frac{\lambda}{Q}+\lambda+\gamma-Q\quad\quad\quad\quad\quad\left(\textnormal{as}\, \frac{1}{p}+\frac{1}{p'}=1\right)\nonumber\\&=\delta+\gamma-1.
\end{align*}
So that we have
\begin{align}
  \frac{Q-1}{q}+\lambda+\mu-Q=\delta+\gamma-1,
\end{align}
and therefore, from \eqref{eq3.9} we get
\begin{align}\label{W2}
&\left(\int_\mathbb{G} \left||x|^{\lambda+\mu-Q}\int_{\mathbb{B}(0,|x|)}\frac{u(y)}{|y|^\mu} dy \right|^q dx\right)^\frac{1}{q} \nonumber \\& \quad\quad\quad=
    |\mathfrak{S}|^\frac{1}{q}\left(\int_0^\infty \left| r^{\delta+\gamma-1}  \int_0^r \rho^{-\gamma}\,\rho^{\frac{Q-1}{p}} U(\rho)\,d\rho\right|^q dr \right)^\frac{1}{q} .  
\end{align}
It follows from the relation $\mu=\gamma+\frac{Q-1}{p'}$ that \begin{equation} \label{anjliq}
    \mu<\frac{Q}{p'}\iff \gamma <\frac{1}{p'}.
\end{equation}

Now, we recall the one dimensional Hardy inequality from Theorem \ref{Dim1}: For $ 1<p\le q<\infty $, the inequality 
\begin{align}\label{OH}
    \left(\int_0^\infty \left(\int_0^x u(t) dt\right)^q x^{\beta'} \,dx\right)^\frac{1}{q} \le D_{p,q,\alpha'}\left(\int_0^x u^p(x) x^{\alpha'} \, dx\right)^\frac{1}{p}
\end{align} holds for all measurable functions $u$ on $(0,\infty)$ if and only if
\begin{equation}
    \alpha'<p-1\, \text{and}\,q(\alpha'+1)-p(\beta'+1)=pq
\end{equation} with the sharp constant $D_{p,q,\alpha'}$ (when $p<q$ as well as $p=q$) given by Theorem \ref{Sharpconst}.

 For our convenience we rewrite \eqref{OH} with  both the weight functions on the left hand side:
 
 \begin{align}\label{SS}
 \left(\int_0^\infty\left|x^{\beta'/q}\int_0^x \frac{t^{\alpha'/p}u(t)}{t^{\alpha'/p}}\, dt\right|^q \, dx\right)^\frac{1}{q}\le D_{p,q,\gamma}\left(\int_0^\infty (x^{\alpha'/p}u(x))^p\, dx\right)^\frac{1}{p}.
\end{align}
 For that we change the  notation: $\frac{\beta'}{q}=\delta+\gamma-1,\,\alpha'=\gamma p,\,\text{and}\,u(t) t^{\gamma}=g(t)$ to get
\begin{align}\label{S}
 \left(\int_0^\infty\left|x^{\delta+\gamma-1}\int_0^x\frac{g(t)}{t^{\gamma}}\, dt\right|^q \, dx\right)^\frac{1}{q}\le D_{p,q,\gamma}\left(\int_0^\infty g^p(x)\, dx\right)^\frac{1}{p};\quad \gamma<\frac{1}{p'},
\end{align} where $1<p<q<\infty$ and $\delta=\frac{1}{p}-\frac{1}{q}$ as $\delta=\frac{\lambda}{Q}$. Here the sharp constant $D_{p,q,\gamma}$ is given by $D_{p,q,\alpha'}$ as in \eqref{sharp<q} by replacing $\alpha'$ by $\gamma p.$

Next, the one dimensional sharp Hardy inequality \eqref{S} is applicable on right hand side of identity \eqref{W2} with the function $g(\rho)=\rho^\frac{Q-1}{p}U(\rho)$ and the relation \eqref{anjliq} by recalling the relation here for the inequalities in \eqref{condn}:  $$\alpha<Q(p-1)\quad\text{and}\quad q(\alpha+Q)-p(\beta+Q)=pqQ$$ holds if and only if  $$\alpha'<p-1\quad\text{and}\quad q(\alpha'+1)-p(\beta'+1)=pq$$ with $\alpha'=\alpha/Q$ and $\beta'=\beta/Q.$
Therefore, from \eqref{W2} we obtain the following sharp inequality
\begin{align}\label{HD}
 &\left(\int_\mathbb{G} \left||x|^{\lambda+\mu-Q}\int_{\mathbb{B}(0,|x|)}\frac{u(y)}{|y|^\mu} dy \right|^q dx\right)^\frac{1}{q} \le 
 D_{p,q,\gamma} |\mathfrak{S}|^\frac{1}{q}\left(\int_0^\infty \rho^{Q-1} U^p(\rho)\,d\rho\right)^{\frac{1}{p}}.
 \end{align}
 By the Hölder inequality, we calculate
 \begin{align}\label{HE}
     U^p(\rho)=\left(\int_\mathfrak{S}|u(\rho\sigma)|\,d\sigma\right)^p\le \left( \int_{\mathfrak{S}} 1 d\sigma \right)^{p-1} \int_\mathfrak{S}|u(\rho \sigma)|^p \, d\sigma = |\mathfrak{S}|^{p-1}\int_\mathfrak{S}|u(\rho \sigma)|^p \, d\sigma, \end{align}
 and use this in \eqref{HD} to get
 \begin{align} \label{HE5}
 &\left(\int_\mathbb{G} \left||x|^{\lambda+\mu-Q}\int_{\mathbb{B}(0,|x|)}\frac{u(y)}{|y|^\mu} dy \right|^q dx\right)^\frac{1}{q} \nonumber \\& \quad\quad\le D_{p,q,\gamma} |\mathfrak{S}|^\frac{1}{q}|\mathfrak{S}|^{\frac{p-1}{p}}\left(\int_0^\infty  \rho^{Q-1}\int_\mathfrak{s}|u(\rho \sigma)|^p\,d\sigma\,d\rho\right)^\frac{1}{p} \nonumber
 \\&\quad\quad\quad=D_{p,q,\gamma}|\mathfrak{S}|^{1+\frac{1}{q}-\frac{1}{p}}\left(\int_0^\infty  \rho^{Q-1}\int_\mathfrak{S}|u(\rho \sigma)|^p\,d\sigma\,d\rho\right)^\frac{1}{p}.
     \end{align}
Again using polar decomposition we note that 
 $$ \int_0^\infty \rho^{Q-1}\int_\mathfrak{S}|u(\rho \sigma)|^p\,d\sigma\,d\rho=\int_\mathbb{G}|u(y)|^p \,dy,$$ and consequently, \eqref{HE5} yields the following inequality  \begin{equation}
     \left(\int_\mathbb{G} \left||x|^{\lambda+\mu-Q}\int_{\mathbb{B}(0,|x|)}\frac{u(y)}{|y|^\mu} dy \right|^q dx\right)^\frac{1}{q}\le D_{p,q,\gamma}|\mathfrak{S}|^{1+\frac{1}{q}-\frac{1}{p}}\left(\int_\mathbb{G}|u(y)|^p\,dy\right)^\frac{1}{p}
 \end{equation} with $\gamma=\mu-\frac{Q-1}{p'}.$
 Finally, by choosing  $\mu=\frac{\alpha}{p},$ we note that $\frac{\beta'}{q}=\delta+\gamma-1$ and $\alpha'=\gamma p$ holds, if and only if $\lambda+\mu-Q=\frac{\beta}{q}$ holds. Indeed,
 by recalling that $\alpha'=\frac{\alpha}{Q},\, \beta'=\frac{\beta}{Q}$ and $\delta=\frac{\lambda}{Q},$ we get 
 \begin{align}
     \frac{\beta}{Qq}=\frac{\beta'}{q}= \delta+\gamma-1 \iff \frac{\beta}{q}=\delta Q+\gamma Q-Q=\lambda+\mu-Q,
 \end{align} where we have used that $\gamma Q=\frac{\alpha'}{p}Q=\frac{\alpha}{Qp}Q=\frac{\alpha}{p}=\mu \iff \gamma Q=\frac{\alpha}{p} \iff \gamma p=\frac{\alpha}{Q}=\alpha'.$
 
 Next, by replacing $\frac{u(y)}{|y|^\mu}=f(y),$
 we obtain $\gamma=\frac{\alpha}{p}-\frac{Q-1}{p'}$ and the sharp inequality 
\begin{equation}
   \left(\int_\mathbb{G} \left( \int_{\mathbb{B}(0,|x|)}f(y) dy\right)^q  |x|^\beta\, dx \right)^\frac{1}{q}\le C(p,q, Q, \alpha)\left(\int_\mathbb{G}|f(y)|^p|y|^\alpha \,dy\right)^\frac{1}{p}  
\end{equation} for $1<p<q<\infty$ with the sharp constant 
\begin{align*}
   C(p,q, Q, \alpha)&:= D_{p,q,\gamma}|\mathfrak{S}|^{1+\frac{1}{q}-\frac{1}{p}}\\& =|\mathfrak{S}|^{1+\frac{1}{q}-\frac{1}{p}}\left ( \frac{p-1}{p-1-\gamma p}\right)^{\frac{1}{p'}+\frac{1}{q}} \left( \frac{p'}{q}\right)^\frac{1}{p} \left( \frac{\frac{q-p}{p}\Gamma \left(\frac{pq}{q-p} \right)}{\Gamma \left( \frac{p}{q-p} \right)\Gamma \left( \frac{p(q-1)}{q-p}\right)}\right)^{\frac{1}{p}-\frac{1}{q}}.
\end{align*}
By using the value of $\gamma$, i.e., $\gamma=\frac{\alpha}{p}-\frac{Q-1}{p'},$ we calculate
$$p-1-\gamma p=p-1-\left(\frac{\alpha}{p}-\frac{Q-1}{p'} \right)p=Q(p-1)-\alpha.$$
Therefore, by putting the value of $p-1-\gamma p$, we get the sharp constant
\begin{equation}
    C(p,q, Q, \alpha) =|\mathfrak{S}|^{1+\frac{1}{q}-\frac{1}{p}}\left ( \frac{p-1}{Q(p-1)-\alpha }\right)^{\frac{1}{p'}+\frac{1}{q}} \left( \frac{p'}{q}\right)^\frac{1}{p} \left( \frac{\frac{q-p}{p}\Gamma \left(\frac{pq}{q-p} \right)}{\Gamma \left( \frac{p}{q-p} \right)\Gamma \left( \frac{p(q-1)}{q-p}\right)}\right)^{\frac{1}{p}-\frac{1}{q}},
\end{equation} when $1<p<q<\infty.$

For the case $p=q,$ the proof follows verbatim to the proof of the case $p<q$ above. The value of the best constant $C(p,p,Q,\alpha)$ is given by 
\begin{align}
    C(p,p,Q,\alpha)= D_{p,p,\gamma} |\mathfrak{S}|,
\end{align} where $D_{p,p,\gamma}=\lim_{p \rightarrow q} D_{p,q,\gamma} =\frac{p}{p-1-\gamma p}. $ Therefore, 
\begin{equation}
  C(p,p,Q,\alpha)=\lim_{p\rightarrow q} C(p,q,Q,\alpha)=|\mathfrak{S}| \lim_{p \rightarrow q} D_{p,q,\gamma}=|\mathfrak{S}| \frac{p}{p-1-\gamma p}
\end{equation} where $\gamma=\frac{\alpha}{p}-\frac{Q-1}{p'}.$ 
\newline By calculating the value of $p-1-\gamma p$  and using $\gamma=\frac{\alpha}{p}-\frac{Q-1}{p'},$ we get $p-1-\gamma p=Q(p-1)-\alpha$ as above. Therefore, we obtain 
\begin{equation}
    C(p,p,Q,\alpha)= \frac{p|\mathfrak{S}|}{Q(p-1)-\alpha},  
\end{equation} completing the proof of the Theorem \ref{sharphardy}. \end{proof}

\begin{rem}
It is clear from the proof of Theorem \ref{sharphardy} that the constant $C(p,q,Q,\alpha)$ in \eqref{sharphardy1} is sharp because the constant $D_{p,q, \gamma}$ in the one dimensional  inequality \eqref{S} is sharp. 
\end{rem}

The next result is the sharp weighted conjugate integral Hardy inequality on homogeneous groups.
\begin{theorem}
Let $\mathbb{G}$ be a homogeneous group of homogeneous dimension $Q$ equipped with a quasi norm $|\cdot|.$ Let $1<p\leq q<\infty$ and let $\alpha,\beta \in \mathbb{R}$. Then the following inequality  
\begin{align}
    \left( \int_{\mathbb{G}}  \left( \int_{\mathbb{G} \backslash \mathbb{B}(0,\, |x| )} u(y)\, dy\right)^q |x|^{\beta} dx\right)^{\frac{1}{q}} \leq C(p, q, Q,\alpha) \left(\int_{\mathbb{G}} |u(x)|^p |x|^{\alpha} dx \right)^{\frac{1}{p}}
\end{align} holds for all mearurable functions $u$ on $\mathbb{G}$ if and only if 
$$\alpha>Q(p-1)\quad \text{and}\quad q(\alpha+Q)-p(\beta+Q)=pqQ.$$ Moreover, the constant $C(p, q, Q,\alpha)$ is sharp and given by 
$$C(p,q,Q,\alpha)= |\mathfrak{S}|^{1+\frac{1}{q}-\frac{1}{p}} \left( \frac{p-1}{\alpha-Q(p-1)}\right)^{\frac{1}{p'}+\frac{1}{q}}\left(\frac{p'}{q}\right)^{\frac{1}{p}} \left( \frac{\frac{q-p}{p}\Gamma \left(\frac{pq}{q-p} \right)}{\Gamma \left( \frac{p}{q-p} \right)\Gamma \left( \frac{p(q-1)}{q-p}\right)}\right)^{\frac{1}{p}-\frac{1}{q}}$$ when $q>p$ and 
$$C(p, p, Q, \alpha)= \lim_{p\rightarrow q} C(p,q,Q,\alpha)=\frac{p|\mathfrak{S}|}{\alpha- Q(p-1)}$$
when  $p=q$, where $\mathfrak{S}= \{x \in \mathbb{G}: |x|=1\} \subset \mathbb{G}$ is the unit sphere with respect to the quasi-norm $|\cdot|$, and $|\mathfrak{S}|$ denotes the measure of the unit sphere $\mathfrak{S}$ in the homogeneous group  $\mathbb{G}$ with respect to the quasi-norm $|\cdot|$.
\end{theorem}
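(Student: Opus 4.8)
The plan is to transcribe the proof of Theorem~\ref{sharphardy}, replacing the Hardy operator by its conjugate and the one-dimensional Hardy inequality \eqref{EQ:Hardy3} by the one-dimensional \emph{conjugate} Hardy inequality \eqref{EQ:Hardy4} of Theorem~\ref{Dim1}(b). First I would introduce the conjugate weighted Hardy operator on $\mathbb{G}$ in the form
\[
(Hu)(x)=|x|^{\lambda+\mu}\int_{\mathbb{G}\setminus\mathbb{B}(0,|x|)}\frac{u(y)}{|y|^{Q+\mu}}\,dy,\qquad \frac{\lambda}{Q}=\frac1p-\frac1q\ge 0,
\]
with $\mu\in\mathbb{R}$ to be fixed at the very end, and estimate $\|Hu\|_{L^q(\mathbb{G})}$. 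Applying the polar decomposition \eqref{polar} with $x=r\sigma'$ and $y=\rho\sigma$ (so that $\mathbb{G}\setminus\mathbb{B}(0,|x|)$ becomes $\{\rho>r\}$) and setting $U(\rho)=\int_{\mathfrak{S}}u(\rho\sigma)\,d\sigma$, the spherical integral in $x$ contributes a factor $|\mathfrak{S}|$ and I am left with the one-dimensional quantity $|\mathfrak{S}|^{1/q}\left(\int_0^\infty \left|r^{a}\int_r^\infty g(\rho)\rho^{-\gamma}\,d\rho\right|^q dr\right)^{1/q}$, where $g(\rho)=\rho^{(Q-1)/p}U(\rho)$ and $a,\gamma$ are explicit affine combinations of $\lambda,\mu,Q,p,q$, obtained by the same splitting $Q-1=\frac{Q-1}{p}+\frac{Q-1}{p'}$ used in \eqref{eq3.9}.

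Next I would apply the one-dimensional conjugate Hardy inequality \eqref{EQ:Hardy4}, whose sharp constant coincides with that of \eqref{EQ:Hardy3} by part (c) of Theorem~\ref{Dim1} and is therefore given by Theorem~\ref{Sharpconst}. The matching of exponents is forced and consistent (exactly as in the proof of Theorem~\ref{sharphardy}, it reduces to the one-dimensional relation with $\alpha'=\alpha/Q$ and $\beta'=\beta/Q$, which is equivalent to the balance $q(\alpha+Q)-p(\beta+Q)=pqQ$). The one essential difference from the Hardy case is the direction of the inner integral: \eqref{EQ:Hardy4} requires $\alpha_0>p-1$ instead of $\alpha_0<p-1$, which through $\gamma$ becomes $\gamma>1/p'$, i.e.\ $\alpha>Q(p-1)$, and the sharp constant now carries the factor $\left(\frac{p-1}{\gamma p-(p-1)}\right)^{1/p'+1/q}$. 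With the final choice of $\mu$ one has $\gamma=\frac{\alpha}{p}-\frac{Q-1}{p'}$, hence $\gamma p-(p-1)=\alpha-Q(p-1)$, which is precisely the denominator appearing in the claimed constant.

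After the one-dimensional step I would bound $U^p(\rho)=\left(\int_{\mathfrak{S}}|u(\rho\sigma)|\,d\sigma\right)^p$ by Hölder's inequality, as in \eqref{HE}, picking up the factor $|\mathfrak{S}|^{p-1}$, and use \eqref{polar} once more to recognize $\int_0^\infty\rho^{Q-1}\int_{\mathfrak{S}}|u(\rho\sigma)|^p\,d\sigma\,d\rho=\int_{\mathbb{G}}|u(y)|^p\,dy$; collecting the spherical factors produces $|\mathfrak{S}|^{1/q}|\mathfrak{S}|^{1/p'}=|\mathfrak{S}|^{1+1/q-1/p}$. Substituting $f(y)=u(y)/|y|^{Q+\mu}$ and choosing $\mu=\frac{\alpha}{p}-Q$ then turns the left-hand weight into $|x|^{\beta}$ (via $\lambda+\mu=\beta/q$) and the right-hand weight into $|y|^{\alpha}$ (via $(Q+\mu)p=\alpha$), which yields the stated inequality with the announced constant; the endpoint $p=q$ follows by the limit $p\to q$ of the one-dimensional sharp constant, exactly as in \eqref{Cont}.

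For the necessity of the conditions and for sharpness, the key observation is that Hölder's inequality in \eqref{HE} is an equality precisely when $u$ is radial. Thus, restricted to radial functions, the group inequality is equivalent—up to the explicit powers of $|\mathfrak{S}|$—to \eqref{EQ:Hardy4}; so its validity for all $u$ forces \eqref{EQ:Hardy4}, giving the necessity of $\alpha>Q(p-1)$ (the balance is in addition visible directly from the dilation invariance $u\mapsto u(\lambda\,\cdot\,)$), and conversely no constant smaller than the stated one can work, since the extremizing sequences for the sharp one-dimensional constant lift through radial functions to near-extremizers for the group inequality. I expect the only real difficulty to be bookkeeping: fixing the operator exponents and the parameter $\mu$ so that the conjugate structure (inner integration over $|y|>|x|$) produces the reversed condition $\alpha>Q(p-1)$ and the denominator $\alpha-Q(p-1)$ rather than their Hardy counterparts; once the exponents are aligned, the argument is a verbatim repetition of the proof of Theorem~\ref{sharphardy}.
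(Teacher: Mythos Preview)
Your proposal is correct and follows exactly the route the paper takes: the paper's own proof is a single sentence saying to repeat the argument of Theorem~\ref{sharphardy} with the one-dimensional conjugate Hardy inequality \eqref{EQ:Hardy4} in place of \eqref{EQ:Hardy3}, and that is precisely what you carry out (with the bookkeeping details filled in). Your additional remark on necessity and sharpness via radial functions is a welcome elaboration that the paper leaves implicit.
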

\begin{proof}
The proof is similar to the proof of Theorem \ref{sharphardy} using one dimensional sharp weighted conjugate Hardy inequality \eqref{EQ:Hardy4} from Theorem \ref{Dim1} instead of one dimensional sharp Hardy inequality \eqref{EQ:Hardy3} from Theorem \ref{Dim1} .

\end{proof}

\section*{acknowledgement} 

MR is supported by the FWO Odysseus 1 grant G.0H94.18N: Analysis and Partial
Differential Equations, the Methusalem programme of the Ghent University Special Research
Fund (BOF) (Grant number 01M01021) and by EPSRC grant
EP/R003025/2. AS is supported by UGC Non-NET fellowship.

\end{document}